\newtheorem{theorem}{Theorem}[section]
\newtheorem{lemma}[theorem]{Lemma}
\newtheorem{proposition}[theorem]{Proposition}
\newtheorem{corollary}[theorem]{Corollary}
\newtheorem{preexample}{Example}[section]
\newenvironment{example}{\begin{preexample}}{\end{preexample}}
\newtheorem{preremark}{Remark}
\newcommand{\qed }{ \hfill $\Box$ }
\newcommand{\g}{\mathfrak{g}}
\title{A topological conjugacy of invariant flows on some class of Lie groups}
\author{Alexandre J. Santana  and Sim\~ao N. Stelmastchuk}
\begin{document}

\maketitle

\begin{abstract}
  The aim of this paper is to give a condition to topological conjugacy of invariant flows in an Lie group $G$ which its Lie algebra $\g$ is associative algebra or semisimple. In fact, we show that if two dynamical system on $G$ are hyperbolic, then they are topological conjugate.  
\end{abstract}

\textbf{Keywords: } topological conjugacy, invariant flows, Lie groups.

\textbf{AMS 2010 subject classification}: 22E20, 54H20, 37B99

\section{Introduction}

Let $G$ be a Lie group with Lie algebra $\g$. Taking $A \in \g$ we consider the dynamical system 
\begin{equation}\label{systemLiegroup}
  \dot{g} = A_g = R_{g*}(A), \ \ g \in G.
\end{equation}
We are interested in establishing a condition to topological conjugacy to dynamical system (\ref{systemLiegroup}).  We will restrict our attention to two class of the Lie groups, when $\g$ is an associative or semisimple Lie algebra.  Our idea is to study the prime case and after to apply this in the semisimple case. 

In following we explain our idea to work when $\g$ is an associative Lie algebra. We begin by remembering that the topological conjugacy of linear system in $\mathbb{R}^n$ given by
\begin{equation}\label{systemRn}
  \dot{x} = A x, \ \ x \in \mathbb{R}^n, \ \ A \in gl(n,\mathbb{R})
\end{equation}
is well established, see for instance the textbooks due to Robinson \cite{robinson} and due to Colonius and Kliemann \cite{coloniuskliemann}. It is well known that the main tool to this classification is the hyperbolic property. Namely, a dynamical system (\ref{systemRn}) has the hyperbolic property if there exists an $a>0$ such that $\| e^{At}x\| \leq e^{-at}\|x\|$ for all $t>0$. Here we observe that under this study there is a normed space structure. 

Observing the developments to dynamical system (\ref{systemRn}) our idea is to introduce a normed space structure to work with the dynamical system (\ref{systemLiegroup}). Hovewer, it is clear that $G$, in general, does not have a normed space structure. Thus instead of work directly with $G$ we lift the dynamical system (\ref{systemLiegroup}) to one on $G\times \g$, which inherits a normed vector structure from the Lie algebra $\g$. This idea follows from works due to Ayala, Colonius and Kliemann \cite{ayalacolonius}, Colonius and Santana \cite{santana}, \cite{santana2}. Being more clear, we lift the dynamical system (\ref{systemLiegroup}) to one on $G\times \g$ given by
\begin{equation}\label{systemGg}
  \left(
  \begin{array}{c}
  \dot{g}\\
  \dot{v}
  \end{array}
  \right)
  =
    \left(
  \begin{array}{c}
  R_{g*}A\\
  Av
  \end{array}
  \right).
\end{equation}
It is true that the solution of this dynamical system is $(e^{At}g, e^{At}v)$. Thus if we suppose that two dynamical system on $G \times \g$ are hyperbolic, then they are topological conjugate, and, as consequence, we show that $e^{At}g$ and  $e^{Bt}g$ are topologically conjugate in $G$. This is assured by Theorem \ref{topologicallyconjugate}. 

When we consider $G$ as the linear group $GL(n, \mathbb{R})$ and we work with dynamical systems of kind $\dot{X} = A X$ where $X \in GL(n,\mathbb{R})$, we use  Theorem \ref{topologicallyconjugate} to recover the classical result by dynamical system (\ref{systemRn}): if real part of generalized eigenvalues of $A$ and $B$ are negative, then $e^{At}$ and $e^{Bt}$ are topological conjugate. As a simple application we give a partial classification for dynamical system of $GL(2,\mathbb{R})$.

To end we consider the dynamical system (\ref{systemLiegroup}) in a semisimple Lie group. Here our idea is to study the dynamical system (\ref{systemLiegroup}) in case of the Adjoint group $Ad(G)$. In fact, we show that if real part of generalized eigenvalues of $ad(A)$ and $ad(B)$ are negative, then $e^{At}$ and $e^{Bt}$ are topological conjugate. Finally, using previous results we classify dynamical system in an affine group $H\rtimes V$, where $H$ is a semisimple Lie group and $V$ a $n$-dimensional vector space, or rather we state and prove a condition to topological conjugacy of flows in $H\rtimes V$.

\section{Hyperbolic condition to topological conjugate in Lie groups}

Let $G$ be a Lie group and $\g$ its Lie algebra. In this section we assume $\g$ to be an associative algebra. This structure is natural in matrix groups. We adopt in the trivial vector bundle $G \times \g$ the direct product
\[
  (g,v) \cdot (h,u) = (g \cdot h, v \cdot u).
\]
Our main interesting is to study as the hyperbolic property entails topological conjugacy to dynamical systems of kind
\[
  \dot{g} = R_{g*} A , \ \ A \in \g
\]
in the Lie group $G$. Our idea is to lift this study to trivial bundle $G \times \g$ and to study the dynamical system 
\begin{equation}\label{systemGxg}
  \left(
  \begin{array}{c}
  \dot{g}\\
  \dot{v}
  \end{array}
  \right)
  =
    \left(
  \begin{array}{c}
  R_{g*}A\\
  Av
  \end{array}
  \right).
\end{equation}
A direct account shows that $(e^{At}g, e^{At}v)$ is a solution of (\ref{systemGxg}) if $g \in G$ and $v \in \g$.

Since hyperbolic property of dynamical system needs a norm, we begin by adopting a norm $\| \cdot \|_g$ in $\g$. In consequence, in the trivial vector bundle $G \times \g$ we consider the following norm
\[
  \|(g,v)\| = \|v\|_\g.
\]
For this reason,
\[
  \|(e^{At}g, e^{At}v)\| = \|e^{At}v\|_\g.
\]
In consequence, the behavior of the flow $e^{At}v$ drives the behavior of the folw $(e^{At}g, e^{At}v)$ in $G \times \g$.

\begin{lemma}\label{uniquetime}
  Let $A \in \g$. We assume that $e^{At}v$ has the hyperbolic property, that is, there exists $a>0$ such that for every $t\geq 0$ we have
  \begin{equation}\label{hyperpebolic}
    \|  e^{At}v \|_\g \leq e^{-at} \| v\|_\g.
  \end{equation}
  Then there exists a unique time $\tau$ such that $\|(e^{A\tau}g, e^{A\tau}v)\|=1$.
\end{lemma}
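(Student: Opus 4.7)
The plan is to reduce everything to the behavior of the scalar function $f(t) := \|e^{At}v\|_{\g}$, since by the definition of the norm on $G \times \g$ we have $\|(e^{At}g, e^{At}v)\| = \|e^{At}v\|_{\g} = f(t)$. Finding $\tau$ with $\|(e^{A\tau}g, e^{A\tau}v)\| = 1$ is therefore equivalent to finding $\tau$ with $f(\tau) = 1$, so the lemma will follow from the intermediate value theorem combined with a strict monotonicity argument. I assume throughout that $v \neq 0$; otherwise $f \equiv 0$ and the equation $f(\tau)=1$ has no solution, so this hypothesis appears to be implicit in the statement.

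Next I would record that $f$ is continuous and strictly positive on all of $\mathbb{R}$. Continuity is immediate from the continuity of $t \mapsto e^{At}$ into the associative algebra $\g$, of the product $(X,w)\mapsto Xw$, and of the norm; strict positivity follows because $e^{At}$ is invertible, so $e^{At}v\neq 0$ whenever $v \neq 0$.

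Then I would establish strict monotonicity. For $t_1 < t_2$, writing $e^{At_2}v = e^{A(t_2-t_1)}(e^{At_1}v)$ and applying hypothesis (\ref{hyperpebolic}) with exponent $t_2-t_1>0$ and vector $e^{At_1}v$ in place of $v$ gives
\[
  f(t_2) \leq e^{-a(t_2-t_1)} f(t_1) < f(t_1),
\]
so $f$ is strictly decreasing. To control the limits, (\ref{hyperpebolic}) directly yields $f(t)\le e^{-at}\|v\|_{\g}\to 0$ as $t\to +\infty$. For $t\to -\infty$, set $s=-t>0$ and $w=e^{At}v$; then $v=e^{As}w$, and (\ref{hyperpebolic}) applied to $w$ at time $s$ gives $\|v\|_{\g}\le e^{-as}\|w\|_{\g}$, i.e.\ $f(t)=\|w\|_{\g}\ge e^{as}\|v\|_{\g}\to \infty$. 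The intermediate value theorem then produces some $\tau\in\mathbb{R}$ with $f(\tau)=1$, and the strict monotonicity established above forces this $\tau$ to be unique.

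I do not foresee a genuine obstacle here: the whole argument is an IVT/monotonicity computation in a single real variable. The only mildly subtle point is extracting backward-in-time growth of $f$ from the one-sided forward hyperbolicity hypothesis (\ref{hyperpebolic}), which as shown above is a direct substitution.
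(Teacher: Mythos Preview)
Your proposal is correct and follows essentially the same route as the paper's proof: both reduce to the scalar function $f(t)=\|e^{At}v\|_{\g}$, obtain existence from the intermediate value theorem via the forward decay and backward growth estimates, and deduce uniqueness from the inequality $f(t_2)\le e^{-a(t_2-t_1)}f(t_1)$ for $t_2>t_1$. Your packaging via strict monotonicity is slightly cleaner than the paper's separate uniqueness computation, and you are right that the implicit hypothesis $v\neq 0$ is needed.
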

\begin{proof}
  We first observe that if $e^{At}v$ has the hyperbolic property, then 
  \[
    \|(e^{At}g, e^{At}v)\| \leq e^{-at} \| (g,v)\|, \ \ t \geq 0.
  \]
  On the other hand, when $t<0$ we have 
  \[
    \| e^{At}v \|_\g \geq e^{a|t|} \| v\|_\g,
  \]
  and, consequently,
  \[
    \|(e^{At}g, e^{At}v)\| \geq e^{a|t|} \| (g,v)\|.
  \]
  It implies that when $t \to \pm \infty$ it follows that $\|(e^{At}g, e^{At}v)\|$ goes to $\pm \infty$, respectively. Since $\|(\cdot, \cdot) \|$ is a continuous  function, there is a time $\tau$ such that
  \[
    \|(e^{A\tau}g, e^{A\tau}v)\| = 1.
  \]
  We claim that this time is unique. In fact, suppose that there exist $\tau_1, \tau_2 \in \mathbb{R}$ such that $\| (e^{A\tau_1}g, e^{A\tau_1}v)\| = \| (e^{A\tau_2}g, e^{A\tau_2}v) \| = 1$. Assume that $\tau_2 \geq \tau_1$. If we write $\tau_2 = (\tau_2 - \tau_1) + \tau_1$, then
  \[
    1= \| e^{A\tau_2}v\|_\g = \| e^{A (\tau_2 - \tau_1)} e^{A \tau_1}v\|_\g.
  \]
  From hypothesis we have
  \[
    \|e^{A (\tau_2 - \tau_1)} e^{A \tau_1}v\| \leq e^{-a(\tau_2 -\tau_1)}\|e^{A \tau_1}v\| = e^{-a(\tau_2 -\tau_2)}.
  \]
  It follows that $ 1 = e^{-a(\tau_2 -\tau_1)}$, and so $-a(\tau_2 -\tau_1) = 0$. Since $a>0$, it entails that $\tau_2 = \tau_1$. \qed
\end{proof}

\begin{theorem}\label{topologicallyconjugate}
  If $A, B \in \g$ such that $e^{At}v$ and $e^{Bt}v$ satisfy the hyperbolic property, then $(e^{At}g, e^{At}v)$ and  $(e^{Bt}g, e^{Bt}v)$ are topologically conjugate
	in $G \times \g$. In consequence, $e^{At}g$ and  $e^{Bt}g$ are topologically conjugate in $G$.
\end{theorem}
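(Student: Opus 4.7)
The plan is to mirror the classical hitting-time construction of a conjugacy between two hyperbolic linear contractions, transplanted to $G \times \g$ via the seminorm $\|(g,v)\| = \|v\|_\g$. Since this seminorm only sees the algebra factor, the hitting time supplied by Lemma \ref{uniquetime} depends only on $v$; I will write $\tau_A(v)$ for $v \neq 0$, characterized by $\|e^{A\tau_A(v)} v\|_\g = 1$, and similarly $\tau_B(v)$. Three preliminary facts will drive the rest: (i) $\tau_A$ is continuous on $\g \setminus \{0\}$, which follows from strict monotonicity of $t \mapsto \|e^{At}v\|_\g$ forced by the hyperbolic estimate; (ii) the equivariance $\tau_A(e^{As}v) = \tau_A(v) - s$, immediate from the cocycle property; and (iii) $\tau_A(v) \to -\infty$ as $v \to 0$, with a logarithmic rate coming from the hyperbolic bound.

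Next, I would define the candidate conjugacy by
\[
  H(g,v) = \bigl(e^{-B\tau_A(v)}\, e^{A\tau_A(v)}\, g,\ e^{-B\tau_A(v)}\, e^{A\tau_A(v)}\, v\bigr)
\]
for $v \neq 0$, and $H(g,0) = (g,0)$. Geometrically, $H$ flows forward along $A$ until the orbit meets the unit sphere $\{\|v\|_\g = 1\}$, and then flows backward by $B$ for the same amount of time. Verifying the intertwining $H \circ \Phi^A_s = \Phi^B_s \circ H$ is a direct substitution: replacing $v$ by $e^{As}v$ shifts $\tau_A$ by $-s$ by (ii), and the factors rearrange by commutativity of scalar exponentials of the same matrix, giving $e^{-B(\tau_A(v)-s)} e^{A(\tau_A(v)-s)} e^{As} = e^{Bs} e^{-B\tau_A(v)} e^{A\tau_A(v)}$, which matches $\Phi^B_s \circ H(g,v)$.

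I would obtain bijectivity by producing an explicit inverse via swapping the roles of $A$ and $B$ (using $\tau_B$ in place of $\tau_A$), and continuity on the open region $v \neq 0$ follows from continuity of $\tau_A$ together with continuity of the exponential map and of multiplication in $G$. The main obstacle will be continuity at $v = 0$: here $\tau_A(v) \to -\infty$, so the factor $e^{A\tau_A(v)}$ can blow up while $e^{-B\tau_A(v)}$ contracts, and one must estimate the combined operator $e^{-B\tau_A(v)} e^{A\tau_A(v)}$ inside the matrix model furnished by the associative structure of $\g$. The $\g$-component is tractable since $e^{A\tau_A(v)}v$ stays on the unit sphere and the $B$-hyperbolic bound applied forward in time gives $\|e^{-B\tau_A(v)} e^{A\tau_A(v)} v\|_\g \leq e^{-a|\tau_A(v)|} \to 0$; the $G$-component is the delicate point and will require further estimates relating the two exponentials. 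Finally, the consequence for $G$ would follow by pushing the intertwining identity down through the projection $\pi(g,v) = g$, which is a semiconjugacy from the lifted flow to the original flow on $G$, and extracting from the $G$-component of $H$ the induced homeomorphism of $G$ that intertwines $e^{At}$ and $e^{Bt}$.
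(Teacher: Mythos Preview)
Your construction is essentially the paper's: the map $H$ coincides with the paper's $\psi$ (the paper inserts a sphere normalization $h_0$, but since $e^{A\tau_A(v)}v$ already has unit norm this is cosmetic), the hitting time $\tau_A$ is the paper's $t_A$, and the intertwining check and the continuity argument for the $\g$-component at $v=0$ run exactly as you outline. The one point where you hesitate---continuity of the $G$-component at $v=0$ and the descent to a genuine conjugacy on $G$---is not given any additional argument in the paper either: the paper simply asserts ``$\psi_1$ is continuous'' and closes with ``as direct consequence $\psi_1$ is a homeomorphism in $G$ that conjugates $e^{At}$ and $e^{Bt}$,'' so you should not expect to find a resolution of that worry there.
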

\begin{proof}
  We first assume that there exist $a,b>0$ such that for every $t \geq 0$ we have
  \begin{eqnarray*}
    \|(e^{At}g, e^{At}v)\| & \leq & e^{-at} \| (g,v)\|\\
    \|(e^{Bt}g, e^{Bt}v)\| & \leq & e^{-bt} \| (g,v)\|.
  \end{eqnarray*}
  For every $t<0$, it is direct that
  \begin{eqnarray*}
    \|(e^{At}g, e^{At}v)\| & \geq & e^{a|t|} \| (g,v)\|\\
    \|(e^{Bt}g, e^{Bt}v)\| & \geq & e^{b|t|} \| (g,v)\|.
  \end{eqnarray*}
  Denote the unit sphere in $G\times \g$ by $\mathbb{S}$, that is,
  \[
    \mathbb{S} = \{ (g,v) : \|(g,v)\| = 1\}.
  \]
  From Proposition (\ref{uniquetime}) it is clear that the flows $(e^{At}g, e^{At}v)$ and $(e^{Bt}g, e^{Bt}v)$ cross $\mathbb{S}$ in an unique point.

  Define the map $t_{A}: G\times \g  \rightarrow \mathbb{R}$ that associates for every $(g,v) \in G\times\g$ the unique time $t_{A}(g,v)$ such that $\|(e^{At_{A}(g,v)}g,e^{At_{A}(g,v)}v)\| =1$. We claim that $t_{A}$ is continuous. In fact, take $(g,v) \in G\times \g$ and a sequence $(g_n,v_n) \to (g,v)$. We observe that $v_n \rightarrow v$, that is,  $\| v_n - v \|_g \to 0$. For simplicity, suppose that $t_A(g_n,v_n) > 0$ for every $n \in \mathbb{N}$. Thus we have
  \[
    \| e^{A t_A(g_nv_n)}(v_n - v) \|_\g \leq e^{-a t_A(g_n,v_n)}\|v_n - v\|_\g \to 0.
  \]
  This gives that
  \[
    \lim_{n \to \infty}e^{A t_A(g_n,v_n)}(v_n)= \lim_{n \to \infty} e^{A t_A(g_n,v_n)} v.
  \]
  Since $\| \cdot \|_\g$ is a continuous map,
  \[
    \lim_{n \to \infty}\|e^{A t_A(g_n,v_n)}(v_n)\|_\g= \| e^{A \lim_{n \to \infty}t_A(g_n,v_n)} v\|_g
  \]
  As $\|e^{A t_A(g_n,v_n)}(v_n)\|_\g = 1$ we have $\| e^{A \lim_{n \to \infty}t_A(g_n,v_n)} v\|_\g =1$. In according with uniqueness we obtain $\lim_{n \to \infty}t_A(g_n,v_n) = t_A(g,v)$. In consequence, $t_A$ is a continuous map.

  Before we construct our homeomorphism we need to show the following property: $t_A (e^{At}g,e^{At}v) = t_A(g,v) - t$. In fact,
  \[
    1=\|e^{A t_A(g,v)}v\|_\g = \|e^{A t_A(g,v)}e^{A (-t)}e^{A t}v\|_\g = \|e^{A (t_A(g,v)-t)}e^{A t}v\|_\g.
  \]
  By uniqueness, we conclude that $t_A(e^{At} g,e^{At}v) = t_A(g,v) - t$.

  We define the map $\psi_0 : \mathbb{S} \rightarrow \mathbb{S}$ by $\psi_o(g,v) = (g,\frac{v}{\| v  \|_{\mathfrak{g}}})$ and the map $\phi_0 : \mathbb{S} \rightarrow \mathbb{S}$ by	$\phi_o(Y) = (g^{-1},\frac{v}{\|v\|_\g})$. It is clear that $\phi_0 = \psi_0^{-1}$. It is clear that that $\psi_0$ and $\phi_0$ are continuous maps. Hence  $\psi_0$ is an homeomorphism.  
   
  Our next goal is to extend the homeomorphism $\psi_0$ to $G\times \g$. We begin by defining the map $\psi: G\times \g \rightarrow G\times \g$ by
  \[
    \psi(g,v)=
    \left\{
    \begin{array}{cc}
      ( e^{-B\, t_{A}(g,v)}e^{A\, t_{A}(g,v)}g, e^{-B\, t_{A}(g,v)}h_{0}(e^{A t_{A}(g,v)}v)) & \mbox{ if } v \neq 0\\
       (g,0) & \mbox{ if } v = 0 .
    \end{array}
    \right.
  \]
  We write $\psi(g,v) = (\psi_1(g), \psi_2(v))$, where
  \[
    \psi_1(g)= e^{-B\, t_{A}(g,v)}e^{A\, t_{A}(g,v)}g
  \]
  and
  \[
    \psi_2(v)=
    \left\{
    \begin{array}{cc}
      e^{-B\, t_{A}(g,v)}h_{0}(e^{A t_{A}(g,v)}v) & \mbox{ if } v \neq 0\\
      0 & \mbox{ if } v = 0 .
    \end{array}
    \right.
  \]
  We proceed to show the conjugacy. In fact
  \begin{eqnarray*}
    \psi(e^{A t}g, e^{A t}v)
    & = & (e^{-B\, t_{A}(e^{A t} g,e^{A t} v)}e^{A t_{A}(e^{A t} g,e^{A t} v)}e^{A t} g,\,\, e^{-B\, t_{A}(e^{A t} g,e^{A t} v)}h_{0}(e^{A t_{A}(e^{A t} g,e^{A t} v)}e^{A t} v)\\
    & = & (e^{-B\, [t_{A}(g,v) -t]}e^{A [t_{A}(g,v) -t]}e^{A t} g,\,\, e^{-B\, [t_{A}(g,v) -t]}h_{0}(e^{A [t_{A}(g,v) -t]}e^{A t} v))\\
    & = & (e^{B t} e^{-B\, t_{A}(g,v)}e^{A t_{A}(g,v)} g,\,\, e^{B t} e^{-B\, t_{A}(g,v)}h_{0}(e^{A t_{A}(g,v)} v))\\
    & = & (e^{B t} \psi_1(g),\,\, e^{B t} \psi_2(v))\\
    & = & (e^{B t}, e^{B t})(\psi_1(g),\psi_2(v))\\
    & = & (e^{B t}, e^{B t})\psi(g,v).
  \end{eqnarray*}
  The next step is to show that $\psi$ is continuous. We need only consider the case $(g,v)$ with $v=0$. In fact, the map $\psi=(\psi_1,\psi_2)$ is continuous if $v\neq 0$ because $(e^{At}g,e^{At}v)$, $(e^{Bt}g,e^{Bt}v)$, $\tau_A$ and $\tau_B$ are continuous. We begin by observing that $\psi_1$ is continuous, so we need verifying the continuity of the map $\psi_2$. Thus fix a $g \in G$ and take a sequence $v_n$ such that $v_n \to 0$ in $\g$. Our work is to show that
  \[
    \psi_2(v_n) \rightarrow \psi_2(0) = 0.
  \]
  If $t_{A}(g_n,v_n)$ is the time such that $\|e^{A t_{A}(g_n,v_n)}v_n\|_\g =1$, a well known argument shows that $ t_n = t_A(g_n,v_n) \to -\infty$.
  Let us denote $u_n = \psi_0(e^{At_{n}}v_{n})$. Thus
  \[
    \|u_n\|_\g = \|\psi_0(e^{At_{n}}v_{n}) \|_\g = \frac{\|e^{At_{n}}v_{n}\|_\g}{\|e^{At_{n}}v_{n} \|_\g} = 1
  \]
  and, for this,
  \[
    \|\psi_2(v_n)\|_\g = \|e^{-Bt_{n}}\psi_0(e^{At_{n}}v_{n}) \|_\g \leq = \|e^{-Bt_{n}}\| \|\psi_0(e^{At_{n}}v_{n}) \|_\g = \|e^{-Bt_{n}}\| \leq e^{bt_{n}} \to 0
  \]
  We thus conclude that $\psi_2(v_n) \to 0$ and, therefore, $\psi_2$ is continuous at $0 \in \g$. Consequently, $\psi$ is continuous. 

  To end,  we define the map
  \[
    \psi^{-1}(g,v)=
    \left\{
    \begin{array}{cc}
      ( e^{-A\, t_{B}(g,v)}e^{B\, t_{B}(g,v)}g, e^{-A\, t_{B}(g,v)}h_{0}(e^{B t_{B}(g,v)}v)) & \mbox{ if } v \neq 0\\
      (g^{-1}, 0) & \mbox{ if } v = 0 .
    \end{array}
    \right.
  \]
  which is easily verified that is the inverse map of $\psi$. It follows from the arguments above that $\psi$ is a homeomorphism that conjugate the flows $(e^{At}g, e^{At}v)$ and $(e^{Bt}g, e^{Bt}v)$ in $G \times \g$.

  As direct consequence we see that $\psi_1$ is a homeomorphism in $G$ that conjugate $ e^{At}$ and $e^{Bt}$ in $G$. \qed
\end{proof}

\section{Topological conjugate in $GL(n,\mathbb{R})$}

Let $A$ be a matrix in $gl(n, \mathbb{R})$. We wish to establish a condition to topological conjugate of the dynamical system
\[
  \dot{X} = A X, \,\, X \in Gl(n,\mathbb{R}).
\]
This system is well posted as the reader can be view in \cite{sachkov}. Following the idea of section above we consider the trivial bundle $GL(n,\mathbb{R}) \times gl(n,\mathbb{R})$ and we study the topological conjugacy of the flow generated by
\begin{equation*}
  \left(
  \begin{array}{c}
  \dot{X}\\
  \dot{V}
  \end{array}
  \right)
  =
    \left(
  \begin{array}{c}
  AX\\
  AV
  \end{array}
  \right),
\end{equation*}
that is, the flow given by $(e^{At}X,e^{At}V)$, where in both cases $e^{At} = \sum_{i=1}^{\infty} \frac{(tA)^n}{n!}$.  Our next step is to adopt a norm in $gl(n,\mathbb{R})$. Thus, taking a norm $| \cdot |$  on  $\mathbb{R}^n$ we adopt the supremun norm in $gl(n, \mathbb{R})$ 
\begin{equation}\label{euclidiannorm}
  \|A \| = sup\left\{ \frac{|A x|}{|x|}; 0 \neq x \in \mathbb{R}^n \right\}.
\end{equation}
The choice of the norm $\|\cdot \|$ allow us to show a well-known result between hyperbolic property and eigenvalues( see Theorem 5.1 in \cite[ch. 4]{robinson}). 

\begin{proposition}\label{inequalities}
  Let $A \in gl(n, \mathbb{R})$, and consider the equation $\dot{X} =  A X$. The following are equivalent.
  \begin{enumerate}
    \item There is a norm $\| \cdot \|_*$ and a constant $a>0$ such that for any initial condition $X \in GL(d,\mathbb{R})$, the solution satisfies
        \[
          \| e^{At} X \|_* \leq e^{-at} \|X\|_*, \ \ \textrm{for all} \ \ t > 0.
        \]
    \item for every generalized eigenvalue $\mu$ of $A$ we have $Re(\mu) <0$.
  \end{enumerate}
\end{proposition}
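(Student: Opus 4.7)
My plan is to establish the two implications separately. Direction $(1) \Rightarrow (2)$ is a short spectral-radius argument, while the harder direction $(2) \Rightarrow (1)$ requires building an adapted norm from the Jordan decomposition of $A$.

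For $(1) \Rightarrow (2)$, I first note that since $GL(n,\mathbb{R})$ is dense in $gl(n,\mathbb{R})$ and $\|\cdot\|_*$ is continuous, the hypothesis extends to every $X \in gl(n,\mathbb{R})$. Consequently, the left-multiplication operator $L_{e^{At}}\colon X \mapsto e^{At}X$ on $(gl(n,\mathbb{R}),\|\cdot\|_*)$ has operator norm at most $e^{-at}$. The eigenvalues of $L_{e^{At}}$ are precisely the eigenvalues $e^{\mu t}$ of $e^{At}$ (with multiplicities), so by Gelfand's formula $|e^{\mu t}| \leq \rho(L_{e^{At}}) \leq e^{-at}$ for every generalized eigenvalue $\mu$ of $A$, hence $\mathrm{Re}(\mu) \leq -a < 0$.

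For $(2) \Rightarrow (1)$, I would choose any $a>0$ strictly smaller than $-\max_\mu \mathrm{Re}(\mu)$, and write the complex Jordan form $A = P(D+N)P^{-1}$ with $D$ semisimple, $N$ nilpotent, $[D,N]=0$. Then I apply the classical basis-rescaling trick: on each Jordan block, replace the Jordan basis $\{v_1,\ldots,v_k\}$ by $\{v_1, \varepsilon v_2, \ldots, \varepsilon^{k-1}v_k\}$. In this new basis $D$ is unchanged and $N$ is scaled by $\varepsilon$, so with the Euclidean norm on the rescaled basis I obtain, on each block, $|e^{At}v| \leq e^{(\mathrm{Re}(\mu)+C\varepsilon)t}|v|$ for some $C$ depending only on $A$. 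Choosing $\varepsilon$ small enough that $\mathrm{Re}(\mu)+C\varepsilon < -a$ for every $\mu$, and pairing complex-conjugate blocks to descend back to $\mathbb{R}^n$, yields a norm $|\cdot|$ on $\mathbb{R}^n$ with $|e^{At}v| \leq e^{-at}|v|$. The induced operator norm $\|X\|_* = \sup_{|v|=1}|Xv|$ on $gl(n,\mathbb{R})$ then satisfies $\|e^{At}X\|_* \leq \|e^{At}\|_*\|X\|_* \leq e^{-at}\|X\|_*$ by submultiplicativity.

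The main obstacle is precisely the adapted-norm construction in $(2) \Rightarrow (1)$: controlling the polynomial growth produced by nontrivial Jordan blocks is what forces either the rescaling argument above or, equivalently, the Lyapunov matrix $P = \int_0^\infty e^{2as}e^{A^\top s}e^{As}\,ds$, whose convergence hinges on $2a + 2\mathrm{Re}(\mu) < 0$. Since the paper cites Theorem 5.1 of \cite[ch.~4]{robinson} for this classical fact, I expect the proof in the paper to be short and to route through one of these two standard arguments.
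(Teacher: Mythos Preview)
Your proposal is correct, and you correctly anticipated that the paper would be brief and lean on Robinson. The structural difference is this: the paper does not prove either implication directly at the matrix level. Instead, it quotes the classical vector-space statement from Robinson---that $\mathrm{Re}(\mu)<0$ for all generalized eigenvalues of $A$ is equivalent to the existence of a norm $|\cdot|_*$ on $\mathbb{R}^n$ with $|e^{At}x|_* \leq e^{-at}|x|_*$---and then only proves the bridge
\[
  |e^{At}x|_* \leq e^{-at}|x|_* \quad \Longleftrightarrow \quad \|e^{At}X\|_* \leq e^{-at}\|X\|_*,
\]
where $\|\cdot\|_*$ is the induced operator norm. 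The forward direction of this bridge is your submultiplicativity step; the reverse direction simply specializes $X$ to the identity (the paper phrases it via $x=e_1$). So your $(2)\Rightarrow(1)$ is exactly the paper's route with the Robinson citation unpacked, while your $(1)\Rightarrow(2)$ via the spectral radius of $L_{e^{At}}$ is a genuinely different, more intrinsic argument than the paper's reduction to the vector case. Your approach has the advantage of being self-contained; the paper's buys brevity by outsourcing the Jordan-rescaling work entirely.
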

\begin{proof}
   We first observe that every generalized eigenvalue $\mu$ of $A$ has $Re(\mu)<0$ if and only if there is a norm $| \cdot |_*$ on $\mathbb{R}^n$ and a constant $a>0$ such that for any initial condition $x \in \mathbb{R}$, the solution of $\dot{x} = A x$ satisfies
   \[
     | e^A x |_* \leq e^{-at} |x|_*, \ \ \textrm{for all} \ \ t > 0.
   \]
  (see for example Theorem 5.1, chapter IV of Robinson). Thus it is sufficient to show that
  \[
    |e^{At}x |_* \leq e^{-at} |x|_* \Leftrightarrow \|e^{At}X \|_* \leq e^{-at} \|X\|_*.
  \]
  In fact, suppose that $|e^{At}x |_* \leq e^{-at} |x|_*$ such that $\dot{x} = A x$.
  Since $e^{At}X(x)$ is a solution for $\dot{x} = A x$, it follows that $|e^{At}X(x)|_* \leq e^{-at} |X(x)|_*$. We thus obtain
  \[
    \|e^{At}X \|_* \leq e^{-at} \|X \|_*.
  \]
  Conversely, suppose that $\|e^{At}X \|_* \leq e^{-at} \|X\|_*$, taking in definition (\ref{euclidiannorm}) $x = e_1$ it follows easily that
  \[
    |e^{At}x |_* \leq Ce^{-at} |x|_*
  \]
  with $x$ satisfying $\dot{x} = A x$.
  \qed
\end{proof}

Now we take a norm on $GL(n,\mathbb{R}) \times gl(n,\mathbb{R})$ as $\|(X, V) \| = \|V \|_*$. Thus, we are able to show a sufficiency condition to topological conjugacy in $GL(n,\mathbb{R})$.
\begin{theorem}\label{conjucacaotopologica}
  Let $A, B \in gl(n, \mathbb{R})$, and we consider dynamical systems  $\dot{X} = AX$ and $\dot{Y} = B Y$. If every generalized eigenvalues of $A$ and $B$ have real part negative, then	$e^{A t}$ and $e^{B t}$ are topologically conjugate in $GL(n,\mathbb{R})$.
\end{theorem}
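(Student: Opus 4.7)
The plan is to chain two applications of Theorem \ref{topologicallyconjugate} by routing the conjugacy through the scalar flow $e^{-tI}$ as an intermediate normal form. The key observation is that for any operator norm $\|\cdot\|_*$ on $gl(n,\mathbb{R})$ induced by a norm on $\mathbb{R}^n$, the scalar flow satisfies $\|e^{-tI}V\|_* = e^{-t}\|V\|_*$, so $e^{-tI}$ is hyperbolic with respect to every such norm. This sidesteps the subtlety of producing a single norm in which both $e^{At}$ and $e^{Bt}$ contract simultaneously.

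First I would apply Proposition \ref{inequalities} to $A$ to obtain a norm $\|\cdot\|_*^A$ on $gl(n,\mathbb{R})$ and a constant $\alpha > 0$ with $\|e^{At}V\|_*^A \leq e^{-\alpha t}\|V\|_*^A$ for all $t > 0$. By the observation above, $e^{-tI}$ is also hyperbolic with respect to $\|\cdot\|_*^A$. Equipping $GL(n,\mathbb{R}) \times gl(n,\mathbb{R})$ with the norm $\|(X,V)\| = \|V\|_*^A$ and invoking Theorem \ref{topologicallyconjugate} applied to the pair $(A, -I) \in gl(n,\mathbb{R})$ produces a homeomorphism $\psi^A$ of $GL(n,\mathbb{R})$ conjugating $e^{At}$ with $e^{-tI}$.

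Running the same argument for $B$ with its associated norm $\|\cdot\|_*^B$ produces a homeomorphism $\psi^B$ of $GL(n,\mathbb{R})$ conjugating $e^{Bt}$ with $e^{-tI}$. Since topological conjugacy is a transitive equivalence relation independent of the choice of norm used to produce the conjugating map, the composition $(\psi^B)^{-1} \circ \psi^A$ is a homeomorphism of $GL(n,\mathbb{R})$ conjugating $e^{At}$ with $e^{Bt}$, which is the desired conclusion.

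The principal obstacle is exactly the one finessed above: Theorem \ref{topologicallyconjugate} asks that both flows contract with respect to a single common norm on $\g$, yet for an arbitrary pair $A, B$ whose spectra lie in the open left half-plane no such common norm need exist, as the exponential of $A$ may expand in the norm adapted to $B$ for small positive time. Passing through the intermediate flow $e^{-tI}$, whose hyperbolicity is insensitive to the choice of norm, provides a clean resolution and reduces the argument to two straightforward invocations of the material already established in Section 2.
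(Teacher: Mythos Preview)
Your argument is correct and in fact cleaner than the paper's on the precise point you flag. The paper's own proof simply invokes Proposition \ref{inequalities} to obtain two separate norms $\|\cdot\|_A$ and $\|\cdot\|_B$ in which $e^{At}$ and $e^{Bt}$ respectively contract, and then asserts that ``a direct application of Theorem \ref{topologicallyconjugate} with two distinct norms'' yields the conjugacy. In other words, the paper does not route through an intermediate flow at all; it treats the single-norm hypothesis of Theorem \ref{topologicallyconjugate} as inessential and applies the result directly.

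Your detour through the scalar flow $e^{-tI}$ is a genuinely different and more careful resolution of the two-norm issue. The paper's shortcut is defensible---one can check that the construction in the proof of Theorem \ref{topologicallyconjugate} survives if $t_A$ is defined via $\|\cdot\|_A$ and the contraction of $e^{Bt}$ is used only in $\|\cdot\|_B$ for the continuity-at-zero step, since norm equivalence on $\g$ handles the transition---but this verification is left entirely to the reader. Your approach, by contrast, uses Theorem \ref{topologicallyconjugate} strictly as stated, exploiting the pleasant fact that $-I$ contracts in \emph{every} operator norm, and then appeals only to transitivity. The cost is one extra invocation of the theorem; the gain is that no re-examination of Section 2 is required.
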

\begin{proof}
  We first suppose that every generalized eigenvalues of $A$ and $B$ have real part negative. Thus Proposition \ref{inequalities} assures that there exist norms $\|\cdot \|_A$, 
	$\|\cdot \|_B$ and constants $a,b>0$ such that
  \begin{eqnarray*}
    \| e^{A t} X\|_A & \leq & e^{-at} \|X\|_A \\
    \| e^{B t} Y\|_B & \leq & e^{-bt} \|Y\|_B.
  \end{eqnarray*}
  in $gl(n,\mathbb{R})$. A direct application of Theorem (\ref{topologicallyconjugate}) with two distinct norm shows that then $e^{A t}$ and $e^{B t}$ are topologically conjugate in $GL(n,\mathbb{R})$. \qed
\end{proof}

\begin{corollary}
  Under hypothesis of Theorem \ref{conjucacaotopologica} with ``real part negative'' replaced by ``real part positive'', then 	$e^{A t}$ and $e^{B t}$ are topologically conjugate in $GL(n,\mathbb{R})$.
\end{corollary}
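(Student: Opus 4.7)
The plan is to reduce this corollary to the theorem already proved by a simple time-reversal argument. The key observation is that the hypothesis that every generalized eigenvalue of $A$ has positive real part is equivalent to every generalized eigenvalue of $-A$ having negative real part, since the spectrum of $-A$ is obtained from that of $A$ by negation, preserving algebraic multiplicities of generalized eigenspaces. The same, of course, applies to $B$ and $-B$.

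First I would apply Theorem \ref{conjucacaotopologica} to the matrices $-A$ and $-B$. By the equivalence noted above, $-A$ and $-B$ satisfy the hypothesis of that theorem, so there exists a homeomorphism $\psi : GL(n,\mathbb{R}) \to GL(n,\mathbb{R})$ such that
\[
  \psi \circ e^{-At} = e^{-Bt} \circ \psi \quad \text{for all } t \in \mathbb{R}.
\]
Next, I would substitute $t \mapsto -t$ in this identity. Since the relation holds for every real $t$, this substitution is legitimate and yields
\[
  \psi \circ e^{At} = e^{Bt} \circ \psi \quad \text{for all } t \in \mathbb{R},
\]
so the very same homeomorphism $\psi$ conjugates the flows $e^{At}$ and $e^{Bt}$. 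This completes the argument.

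There is essentially no obstacle here: the content is entirely in Theorem \ref{conjucacaotopologica}, and the corollary is a routine consequence of the symmetry $A \leftrightarrow -A$ together with the fact that topological conjugacy of flows is preserved under reversing the time parameter. The only point worth remarking is that one should verify that the hyperbolic property used in Proposition \ref{inequalities} really does transfer between $A$ and $-A$ when one reverses the direction of time; this is immediate because the estimate $\|e^{-At}X\|_* \leq e^{-at}\|X\|_*$ for $t>0$ corresponds, after replacing $t$ by $-t$, to an exponential expansion of $e^{At}$ in forward time, which is exactly what the positive-real-part hypothesis on the spectrum of $A$ provides.
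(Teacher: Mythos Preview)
Your proposal is correct and follows essentially the same approach as the paper: apply Theorem \ref{conjucacaotopologica} to $-A$ and $-B$, whose generalized eigenvalues have negative real parts. The paper's proof is terser and leaves the time-reversal step implicit, but your explicit substitution $t\mapsto -t$ is exactly what makes the argument go through.
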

\begin{proof}
  It is sufficient to view that every generalized eigenvalues of $-A$ and $-B$ have real part negative and to apply Theorem above. \qed
\end{proof}

\begin{example}
  A matrix in $gl(2, \mathbb{R})$ can be written as   
	\[
	  \left(
		\begin{array}{cc}
		  a & b\\
			c & d
		\end{array}
		\right).
  \]
  Then eigenvalues of matrix above are given by $\lambda = \frac{(a+d) \pm \sqrt{(a-d)^2 + bc}}{2}$. It is direct that when 
	\begin{equation}\label{condition1}
	   (a-d)^2 + bc \leq 0
	\end{equation}
	the real part of eigenvalues depend only of trace $a+d$. Thus, suppose that two matrix $A$ and $B$ in $gl(2,\mathbb{R})$ satisfy the equation (\ref{condition1}),
	then 
	\begin{enumerate}
	  \item If $\rm tr(A) <0$ and $\rm tr(B)<0$, then $e^{At}$ and $e^{Bt}$ are topological conjugacy, by Theorem above.
		\item If $\rm tr(A) >0$ and $\rm tr(B)>0$, then $e^{At}$ and $e^{Bt}$ are topological conjugacy, by Corollary above. 
	\end{enumerate}
\end{example}


\section{Invariant flows on Semisimple  and Affine Lie Groups}

In this section, we study the topological conjugation on semisimple Lie group. Our idea is to transfer the dynamical system $\dot{g} = A_g$ on $G$ to one on the adjoint Lie group $Ad(G)$ and to find a condition to topological conjugation in this group. It is possible because we view $Ad(G)$ as a matrix group and we use the result of section above.

We begin by assuming that $G$ is a semisimple Lie group. We know that solution of $\dot{g} = A_g$ is given by 
\[
 g(t) = e^{At}g_0.
\]
Applying the adjoint operador in $g(t)$ we have
\[
  Ad(g(t)) = Ad(e^{At}g_0) = e^{ad(At)}Ad(g_0).
\]
Differentiate $Ad(g(t))$ with respect to $t$ we obtain
\begin{eqnarray*}
  Ad'(g(t)) & = & d(Ad)_{g(t)*} g'(t)  =  d(Ad)_{g(t)*} R_{g(t)*}(A)\\
	& = & R_{Ad(g(t))*}d(Ad)_{e}(A) =  R_{Ad(g(t))*}ad(A)\\
  & = & ad(A)\cdot Ad(g(t)).
\end{eqnarray*}

Thus taking two system $\dot{g} = A_g$ and $\dot{h} = B_h$ we have the dynamical systems on $Gl(\mathfrak{g})$
\begin{eqnarray*}
  Ad'(g(t)) & = & ad(A) Ad(g(t))\\
  Ad'(h(t)) & = & ad(B) Ad(h(t)).
\end{eqnarray*}
Suppose that every generalized eigenvalues of $ad(A)$ and $ad(B)$  have real part negative. From Theorem \ref{conjucacaotopologica} there exists a homeomorphism $\psi: Gl(\mathfrak{g}) \rightarrow Gl(\mathfrak{g})$  such that 
\[
  \psi(e^{ad(At)} Ad(g_0)) = e^{ad(Bt)} \psi(Ad(g_0)). 
\]  
Since $\mathfrak{g}$ is semisimple, it follows that $Ad$ is an isomorphism over its image (see for instance \cite{helgason}). We thus apply $Ad^{-1}$ at equality above to obtain
\begin{eqnarray*}
  Ad^{-1}(\psi(Ad(e^{At}) Ad(g_0))) & = & Ad^{-1}(Ad(e^{Bt}) \psi(Ad(g_0)))\\
  Ad^{-1}(\psi(Ad(e^{At}g_0))) & = & e^{Bt} Ad^{-1}\psi(Ad(g_0))). 
\end{eqnarray*}
Thus, if we denote $\varphi = Ad^{-1} \circ \psi \circ Ad$ we have 
\[
  \varphi(e^{At}g_0)  =  e^{Bt} \varphi(g_0).
\] 
Summarizing, 
\begin{theorem}\label{conjucacaotopologica}
  Let $A, B \in \g$, and we consider dynamical systems  $\dot{g} = A_g$ and $\dot{h} = B_h$. Suppose that $\g$ is semisimple Lie algebra. If every eigenvalues generalized of $ad(A)$ and $ad(B)$ have real part negative, then	$e^{At}$ and $e^{Bt}$ are topologically conjugate in $G$.
\end{theorem}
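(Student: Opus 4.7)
The plan is to reduce the problem to the already-established linear case on $GL(\g)$ via the adjoint representation. First I would verify that for the solution $g(t)=e^{At}g_0$ of $\dot g = A_g$, the curve $Ad(g(t))$ in $GL(\g)$ solves the matrix ODE $\dot X = ad(A)\, X$. This follows from the chain rule together with the standard identities $d(Ad)_e = ad$ and $Ad(e^{At}) = e^{ad(A)t}$, which together give $\tfrac{d}{dt} Ad(g(t)) = ad(A)\cdot Ad(g(t))$. The analogous identity holds for $B$ and $h(t)$.

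Second, I would invoke the hypothesis that every generalized eigenvalue of $ad(A)$ and of $ad(B)$ has negative real part. Since $ad(A), ad(B) \in gl(\g)$, Theorem \ref{conjucacaotopologica} (the $GL(n,\mathbb{R})$ version) applies and produces a homeomorphism $\psi : GL(\g) \to GL(\g)$ conjugating the flows $e^{ad(A)t}$ and $e^{ad(B)t}$, i.e.\ $\psi\bigl(e^{ad(A)t}X\bigr) = e^{ad(B)t}\psi(X)$ for every $X \in GL(\g)$.

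Third, I would use that semisimplicity of $\g$ forces $ad:\g \to gl(\g)$ to be injective, and so $Ad: G \to Ad(G)$ is a local diffeomorphism with discrete kernel; working on the image $Ad(G)$, where $Ad$ is a homeomorphism onto its image as noted in \cite{helgason}, I can define
\[
  \varphi := Ad^{-1} \circ \psi \circ Ad : G \to G.
\]
Applying $Ad^{-1}$ to the conjugacy relation above, together with $Ad(e^{At}g_0) = e^{ad(A)t}Ad(g_0)$, yields $\varphi(e^{At}g_0) = e^{Bt}\varphi(g_0)$, which is exactly the conjugacy claim. Continuity of $\varphi$ and of $\varphi^{-1} = Ad^{-1}\circ\psi^{-1}\circ Ad$ follows from the continuity of $\psi$, $\psi^{-1}$ and of $Ad$ and its local inverse.

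The main obstacle I anticipate is the passage from $Ad(G)$ back to $G$: even when $\g$ is semisimple, the kernel of $Ad$ is the (possibly nontrivial but discrete) center $Z(G)$, so one must be careful that $\varphi = Ad^{-1}\circ\psi\circ Ad$ is well-defined and a genuine homeomorphism. The cleanest way around this is to restrict the domain of $Ad^{-1}$ to the image $Ad(G)$ and check that $\psi$ preserves $Ad(G)$ (which is a closed subgroup), using that the conjugacy $\psi$ built in Theorem \ref{topologicallyconjugate} respects the orbit structure since it is defined via the flow itself. This verification, together with recording that $\psi$ maps $Ad(G)$ into itself, is the one step that deserves more than a one-line justification; everything else is a formal consequence of Theorem \ref{conjucacaotopologica} and the naturality of $Ad$.
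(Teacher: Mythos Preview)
Your proposal follows exactly the paper's route: push the flows on $G$ to $Ad(G)\subset GL(\g)$ via $Ad$, apply the $GL(n,\mathbb{R})$ conjugacy theorem to $ad(A)$ and $ad(B)$, and then pull the resulting homeomorphism back through $Ad^{-1}$ to obtain $\varphi = Ad^{-1}\circ\psi\circ Ad$. The paper does not separately justify that $\psi$ preserves $Ad(G)$ or worry about the center---it simply invokes (via \cite{helgason}) that for semisimple $\g$ the map $Ad$ is an isomorphism onto its image---so on that point you are being more scrupulous than the original.
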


Our final step is to work with the class of  Affine groups. This idea follows from work due to Kawan, Rocio and Santana \cite{kawan}. Before we introduce the affine Lie groups and we study the topological conjugacy on it so we need to establish a result on semisimple Lie groups.

\begin{proposition}\label{hyperbolicdistance}
  Let $A \in \mathfrak{g}$ and suppose that $\mathfrak{g}$ semisimple Lie algebra. We consider dynamical system $\dot{g} = A_g$. Suppose that $G$ has a left invariant metric and denote by $\rho$ the distance associated to this metric. If $ad(A)$ has real part negative for every generalized eigenvalues, then there exist constants $a,c$ positives such that $\rho(e^{At}g_o, e) \leq c e^{-at} \rho(g_o,e)$. 
\end{proposition}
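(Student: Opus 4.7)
The plan is to push the dynamics $\dot{g} = A_g$ through the adjoint representation, apply Proposition \ref{inequalities} inside $GL(\g)$, and then transfer the resulting matrix contraction back to $G$ using that $\mathrm{Ad}: G \to \mathrm{Ad}(G)$ is a local diffeomorphism. This last property is available because $\g$ is semisimple, so the kernel of $\mathrm{Ad}$ is the discrete center $Z(G)$. The bridge between the two pictures is the identity $\mathrm{Ad}(e^{At}g_0) = e^{\mathrm{ad}(A)t}\mathrm{Ad}(g_0)$, already derived in the paragraphs preceding Theorem \ref{conjucacaotopologica}.

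First, I would apply Proposition \ref{inequalities} to $\mathrm{ad}(A)$ regarded as an element of $\mathfrak{gl}(\g)$. The hypothesis that every generalized eigenvalue of $\mathrm{ad}(A)$ has negative real part yields a norm $\|\cdot\|_*$ on $\mathfrak{gl}(\g)$ and a constant $a > 0$ such that
\[
  \| e^{\mathrm{ad}(A)t} Y \|_* \leq e^{-at}\|Y\|_*, \qquad t > 0,\ Y \in \mathfrak{gl}(\g).
\]
Taking $Y = \mathrm{Ad}(g_0)$ produces the matrix-level contraction $\|\mathrm{Ad}(e^{At}g_0)\|_* \leq e^{-at}\|\mathrm{Ad}(g_0)\|_*$.

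Second, I would compare this matrix estimate with the left-invariant distance $\rho$. Since $\mathrm{Ad}$ is a local diffeomorphism at $e$ and $\rho$ is defined by a left-invariant Riemannian metric, on a compact neighborhood $U$ of $e$ the function $\rho(\,\cdot\,,e)$ and the pullback by $\mathrm{Ad}$ of the matrix-norm-based displacement from the identity are Lipschitz-equivalent: there exist $\alpha,\beta>0$ with $\alpha\,\rho(g,e) \leq \|\mathrm{Ad}(g)\|_* \leq \beta\,\rho(g,e)$ (up to the natural additive normalization at $I$) for all $g \in U$. Combining this local comparison with the contraction from the first step gives $\rho(e^{At}g_0, e) \leq c\, e^{-at}\rho(g_0, e)$ with $c$ determined by $\alpha$, $\beta$, and the initial transient needed for the orbit to enter $U$.

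The hard part will be making the local Lipschitz comparison between $\rho$ and $\|\mathrm{Ad}(\cdot)\|_*$ into a single estimate valid for all $t \geq 0$ and all initial data, rather than only for $g_0$ inside a fixed neighborhood of $e$. The standard way around this is to use the matrix-level contraction itself: it guarantees that for any prescribed neighborhood $U$ of $e$ the orbit $t \mapsto e^{At}g_0$ enters $U$ after a finite time $T = T(g_0)$, so the bound on $[0,T]$ is controlled by the (bounded) ratio $\rho(e^{At}g_0,e)/\rho(g_0,e)$ on a compact set and can be absorbed into the constant $c$.
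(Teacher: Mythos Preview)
Your first step --- pushing the flow through $\mathrm{Ad}$ via the identity $\mathrm{Ad}(e^{At}g_0)=e^{\mathrm{ad}(A)t}\mathrm{Ad}(g_0)$ and invoking Proposition~\ref{inequalities} --- is exactly what the paper does. The divergence is in how you return from $GL(\g)$ to $G$. The paper does not attempt any Lipschitz comparison between the matrix norm $\|\cdot\|_*$ and $\rho$; instead it \emph{transports the left-invariant metric itself} to $\mathrm{Ad}(G)$, declaring $\langle X,Y\rangle_{\mathrm{Ad}(G)}=\langle d(\mathrm{Ad}^{-1})X,\,d(\mathrm{Ad}^{-1})Y\rangle_G$. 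Because $\g$ is semisimple the paper treats $\mathrm{Ad}$ as an isomorphism onto its image (not merely a local diffeomorphism), so the induced distance $\rho_1$ on $\mathrm{Ad}(G)$ satisfies $\rho_1(\mathrm{Ad}(g),\mathrm{Id})=\rho(g,e)$ globally and exactly. The contraction from Proposition~\ref{inequalities} is then read off directly as a statement about $\rho_1$, and the identity $\rho_1\circ\mathrm{Ad}=\rho$ finishes in one line. This bypasses entirely the local-to-global patching you flag as the ``hard part''.

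Your route also carries a genuine gap. The comparison $\alpha\,\rho(g,e)\le\|\mathrm{Ad}(g)\|_*\le\beta\,\rho(g,e)$ cannot hold even on a small neighborhood of $e$, since $\|\mathrm{Ad}(e)\|_*=\|I\|_*\neq 0$ while $\rho(e,e)=0$. You gesture at this with the ``additive normalization at $I$'' caveat, but once you replace $\|\mathrm{Ad}(g)\|_*$ by $\|\mathrm{Ad}(g)-I\|_*$ the multiplicative contraction $\|e^{\mathrm{ad}(A)t}Y\|_*\le e^{-at}\|Y\|_*$ no longer applies, because $I$ is not fixed by left multiplication by $e^{\mathrm{ad}(A)t}$. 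So the bridge from the matrix estimate back to $\rho$ is not actually built. The paper's isometric transport of $\rho$ avoids this mismatch by never mixing the operator norm with the Riemannian distance in the first place.
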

\begin{proof}
  Let $<,>$ be a left invariant metric on Lie group $G$. As $Ad$ is an isomorphism we adopt the following metric on $Ad(G)$: 
  \[
    <X, Y > = < d(Ad^{-1})X, d(Ad^{-1})Y>.
  \]
  Let us denote by $\rho_1$  the distance associated to metric on $Ad(G)$. It follows directly that $\rho_1(Ad(g(t)), Id )= \rho(g(t), e)$ for any smooth curve $g(t) \in G$. Suppose now that $Ad(c(t))$ satisfies the differential equation $Ad'(c(t)) = ad(A)\cdot Ad(c(t))$. Then Proposition \ref{inequalities} assures that there are positive constants $a, c$ such that 
  \begin{eqnarray*}
    \rho_1(Ad(g(t)), Id ) 
	  & = & \rho_1( e^{ad(A)t}Ad(g_0), Id) \leq  c^{-1} e^{-at} \rho_1(Ad(g_0), Id).
  \end{eqnarray*}
  We thus conclude that  $\rho(g(t), e) \leq c^{-1} e^{-at} \rho(g_0, e)$.
\end{proof}

Now we introduce the affine Lie groups. Let $V$ be an $n$-dimensional real vector space and $H$ a Lie group that acts on $V$. Take the group $G=H\rtimes V$ given by the semidirect product of $H$ and $V$. Recall that the affine group operation is defined by $(g,v) \cdot (h,w)=(gh,v+gw)$ for all $(g,v),(h,w) \in G$. Let $\pi:G\rightarrow H$ be the canonical projection of the affine group onto the Lie group $H$. The action of $G$ on $V$, given by $(g,v)\cdot w=gw+v$, with $(g,v)\in G$ and  $w\in V$, is called an affine action. The natural action of $\pi(G)=H$ on $V$ is called a linear action. Denote by $\g=\mathfrak{h} \rtimes V$ the Lie algebra of $G$, where $\mathfrak{h}$ is the Lie algebra of $H$. Note that for simplicity we use the same product symbol for the group and algebra. If $\Phi(t,g)=\exp_G(tX)g$ is a flow on $G$, where $X=(A,b) \in \g=\mathfrak{h} \rtimes V$ and $g\in G$, we denote by $\Phi^H$ the flow on $H$ given by $\Phi^H(t,h)=\exp_H(tA)h$ with $h\in H$.

\begin{proposition}
  Consider the affine group $G=H\rtimes V$ with $H$ being a semisimple Lie group with a left invariant metric. Let $\pi:G\rightarrow H$ be the canonical projection. Take flows $\Phi_i(t,g)=\exp(tX_i)g$ with $X_i=(A_i,b_i)$, $i=1,2$, on $G$, and suppose every eigenvalues generalized of $ad(A_1)$ and $ad(A_2)$ have real part negative. Then $\Phi_1$ is topologically conjugate to $\Phi_2$.%
\end{proposition}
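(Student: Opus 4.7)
The plan is to exploit the semidirect product structure of $G = H\rtimes V$, which decouples the affine flow $\Phi_i$ into a product of flows on the two factors. Writing $\exp_G(tX_i) = (e^{tA_i}, c_i(t))$ with $c_i(t) = \int_0^t e^{s\sigma(A_i)}b_i\,ds$, where $\sigma$ denotes the linear action of $H$ on $V$, the product law $(h_1,v_1)(h_2,v_2) = (h_1h_2,\, v_1 + h_1\cdot v_2)$ gives
\[
  \Phi_i(t,(h,v)) = \bigl(e^{tA_i}h,\; c_i(t) + e^{t\sigma(A_i)}v\bigr).
\]
The $H$-component depends only on $h$ and the $V$-component only on $v$, so $\Phi_i$ splits as a product $\Phi_i^H\times\Phi_i^V$ with $\Phi_i^H(t,h)=e^{tA_i}h$ and $\Phi_i^V(t,v)=c_i(t)+e^{t\sigma(A_i)}v$. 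Any homeomorphism of $G$ of the form $(h,v)\mapsto(\varphi(h),\mu(v))$ that conjugates both factors will automatically conjugate $\Phi_1$ and $\Phi_2$.

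For the $H$-factor, I would invoke Theorem 4.1 applied to $A_1,A_2\in\h$: since $\h$ is semisimple and each $ad(A_i)$ has all generalized eigenvalues with negative real part, there is a homeomorphism $\varphi : H\to H$ conjugating $\Phi_1^H$ and $\Phi_2^H$.

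For the $V$-factor, I would interpret the hypothesis on $ad(A_i)$ as applying to the full adjoint action on $\g=\h\rtimes V$; under the block decomposition $ad(A_i)=ad_{\h}(A_i)\oplus\sigma(A_i)$ this forces $\sigma(A_i)$ to be a contraction, in particular invertible. Setting $v_i^* = -\sigma(A_i)^{-1}b_i$, the translation $T_i(v)=v-v_i^*$ conjugates $\Phi_i^V$ to the purely linear flow $v\mapsto e^{t\sigma(A_i)}v$; this is a direct check from $b_i=-\sigma(A_i)v_i^*$, giving $c_i(t)=(I-e^{t\sigma(A_i)})v_i^*$. The classical topological classification of hyperbolic linear contractions on a finite-dimensional normed space (the same ingredient used in Theorem 3.3) supplies a homeomorphism $\mu_0: V\to V$ conjugating $e^{t\sigma(A_1)}$ and $e^{t\sigma(A_2)}$, and then $\mu = T_2^{-1}\circ\mu_0\circ T_1$ conjugates $\Phi_1^V$ and $\Phi_2^V$.

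The main obstacle is essentially bookkeeping: checking the product decomposition of the flow carefully, and extracting hyperbolicity of $\sigma(A_i)$ on $V$ from the stated hypothesis on $ad(A_i)$. Once these are in place, defining $\Psi(h,v)=(\varphi(h),\mu(v))$ immediately produces a homeomorphism of $G$ conjugating $\Phi_1$ to $\Phi_2$, so the result is assembled entirely from pieces already established in the paper.
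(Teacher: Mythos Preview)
Your route is genuinely different from the paper's. The paper does not split the flow into $H$- and $V$-factors at all; it uses Proposition~\ref{hyperbolicdistance} to show that each projected flow $\Phi_i^H$ on $H$ crosses a unit $\rho$-sphere at a unique time (i.e.\ the sphere is a transversal section), and then invokes Proposition~2 of \cite{kawan} as a black box to pass from ``transversal sections for the $H$-flows'' to ``topological conjugacy of the full affine flows on $G$''. No analysis of the $V$-component, no fixed-point translation, and no spectral condition on the linear action $d\sigma(A_i)$ ever appears.

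This last point is where your argument and the paper's diverge in substance. You handle the $V$-factor by reading ``$ad(A_i)$'' as the adjoint on all of $\g=\h\rtimes V$, so that the block decomposition $ad_\g(A_i,0)=ad_\h(A_i)\oplus d\sigma(A_i)$ forces $d\sigma(A_i)$ to be a contraction. The paper, by contrast, reads $ad(A_i)$ as $ad_\h(A_i)$ on $\h$ alone (this is exactly what Proposition~\ref{hyperbolicdistance} consumes), and the cited result from \cite{kawan} evidently needs no hypothesis on the $V$-action. Under your interpretation your product construction is correct and fully self-contained, which is a clear advantage over outsourcing the work to \cite{kawan}; but under the paper's weaker reading you would have no control over $d\sigma(A_i)$, the fixed point $v_i^*$ need not exist, and your $V$-factor conjugacy collapses. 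So the two proofs are not quite proving the same statement: yours is cleaner and internal to the paper under a (possibly) stronger hypothesis, while the paper's leans on an external reference to get by with less.
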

\begin{proof}
  We first suppose that real part of all generalized eigenvalues of $ad(A_1)$ and $ad(A_2)$  are negative. From Proposition \ref{hyperbolicdistance} we have two unitary spheres  $\mathbb{S}^1_{A_1}$ and $\mathbb{S}^1_{A_2}$ in $G$ with center $g_0$ and $g_1$, respectively,   such that  $\exp(A_1t)g_0$ and $\exp(A_2t)g_1$ cross these in a unique time, respectively. It means that these unitary sphere are transversal sections to the flows $\exp(A_1t)g_0$ and $\exp(A_2t)g_1$, respectively.
	In this way, Proposition 2 in \cite{kawan} assures that $\Phi_1$ and $\Phi_2$ are topological conjugate.
\end{proof}

\end{document}